\newcommand{\noi}{\noindent}
\newcommand{\N}{\mathbb{N}}
\newcommand{\J}{\mathscr{J}}
\newtheorem{theorem}{Theorem}[section]
\newtheorem{definition}[theorem]{Definition}
\newtheorem{lemma}[theorem]{Lemma}
\newtheorem{proposition}[theorem]{Proposition}
\newtheorem{corollary}[theorem]{Corollary}
\title{\textbf{\sc Rainbow Neighbourhood Equate Number of Graphs}}
\author{Johan Kok$^1$, Sudev Naduvath$^2$\footnote{Corresponding Author}}
\affil{\small Centre for Studies in Discrete Mathematics\\ Vidya Academy of Science \& Technology \\ Thrissur, Kerala, India.\\{\tt $^1$kokkiek2@tshwane.gov.za},\\{\tt $^2$sudevnk@gmail.com}}
\date{}
\begin{document}
\maketitle
	
\begin{abstract}
In this paper, a new invariant of a graph namely, the rainbow neighbourhood equate number of a graph $G$ denoted by $ren(G)$ is introduced. It is defined to be the minimum number of vertices whose removal results in a subgraph that admits a $J$-colouring. The new notions of chromatic degree of a vertex $d_\chi(v)$, the maximum and minimum chromatic degrees of $G$ denoted, $\Delta_\chi(G)$ and $\delta_\chi(G)$ respectively, are also introduced. The chromatic diameter of $G$ denoted, $d(G,\chi)$ is introduced as well. The study of $ren(G)$ appears to be very complex for graphs in general so for now, only introductory results will be presented. Finally, the concept of a chromatic degree sequence is proposed as a new research direction. 
\end{abstract}

\noi\textbf{Keywords:} $J$-colouring, rainbow neighbourhood, rainbow neighbourhood equate number, chromatic degree, chromatic diameter.

\vspace{0.25cm}

\noi \textbf{Mathematics Subject Classification 2010:} 05C15, 05C38, 05C75, 05C85. 

\section{Introduction}

For general notation and concepts in graphs and digraphs see \cite{1,5,11}. The \textit{order} of a graph $G$ is the number of vertices in $G$ and is denoted by $\nu(G)=n \geq 1$ and the \textit{size} of $G$ is the number of edges in $G$ and is denoted by $\varepsilon(G)= p \geq 0$. The \textit{degree} of a vertex $v \in V(G)$ is denoted $d_G(v)$ or when the context is clear, simply as $d(v)$. We also say the distance between two vertices $u,v$ denoted, $d(u,v)$ is the length of the shortest $uv$-path in $G$. Unless mentioned otherwise, all graphs $G$ are simple, connected and finite graphs.

We recall that a set $\mathcal{C}$ of distinct colours, say $\mathcal{C}= \{c_1,c_2,c_3,\ldots,c_\ell\}$, $\ell$ sufficiently large, is called a \textit{proper vertex colouring} of a graph $G$, denoted $\varphi:V(G) \mapsto \mathcal{C}$, if no two distinct adjacent vertices have the same colour. The cardinality of a minimum set of colours which allows a proper vertex colouring of $G$ is called the \textit{chromatic number} of $G$ and is denoted $\chi(G)$. We call such a colouring a \textit{$\chi$-colouring} of $G$. 

The number of times a colour $c_i$ is allocated to vertices of a graph $G$ is denoted by $\theta(c_i)$ and $\varphi:v_i \mapsto c_j$ is abbreviated as $c(v_i)=c_j$. When a vertex colouring is considered with colours of minimum subscripts the colouring is called a \textit{minimum parameter colouring}. Unless stated otherwise, we consider minimum parameter colour sets throughout this paper. In addition to this, for coloring the vertices of a graph, we also follow a convention called the \textit{rainbow neighbourhood convention} (see \cite{6}) with respect to which we always colour the maximum possible number of vertices with the colour $c_1$, followed by maximum possible number of remaining vertices with colour $c_2$, and proceeding like this, in the final step we color the remaining uncolored vertices with color $c_\ell$. Such a colouring is called a \textit{$\chi^-$-colouring} of a graph or simply a chromatic colouring. We also say that a graph is chromatically coloured. 

Recall that any graph $G$ of order $\nu(G) \geq 2$ has at least two vertices of equal degree. Various problems related to the aforesaid have been studied. In \cite{3}, a formal study on the repetition number of a graph has been conducted. Essentially the maximum multiplicity of elements in the degree sequence of a graph was studied. In \cite{2}, the largest induced subgraph $H$ in which there exist at least $k$ vertices realizing the maximum degree, $\Delta(H)$ has been reported. The aforesaid was a problem posed in \cite{4}. In essence, it means the removal of the minimum number of vertices from $G$ to ensure the condition. Motivated by this the notion of the rainbow neighbourhood equate number of a graph $G$, denoted by $ren(G)$, will be discussed in the following section.

%\subsection{Recalling $J$-Colouring of a Graph}

The closed neighbourhood $N[v]$ of a vertex $v \in V(G)$ which contains at least one coloured vertex of each colour in the chromatic colouring, is called a rainbow neighbourhood. We say that vertex $v$ yields a rainbow neighbourhood. We also say that vertex $u \in N[v]$ belongs to the rainbow neighbourhood $N[v]$.

\begin{definition}{\rm \cite{10}
A maximal proper colouring of a graph $G$ is said to be a \textit{Johan colouring} or \textit{$J$-colouring} of $G$, if and only if every vertex of $G$ belongs to a rainbow neighbourhood of $G$. The maximum number of colours in a $J$-colouring is denoted by $\J(G)$.
}\end{definition}

\begin{definition}{\rm $[10]$
A maximal proper colouring of a graph $G$ is a \textit{modified Johan colouring} or a \textit{$J^*$-colouring}, if and only if every internal vertex of $G$ belongs to a rainbow neighbourhood of $G$. The maximum number of colours in a $J^*$-colouring is denoted by $\J^*(G)$.
}\end{definition}

Note that, if a graph $G$ admits a $J$-colouring it admits a $J^*$-colouring. However, the converse is not always true. Many other interesting results on $J$-colouring are found in \cite{7, 8, 9, 10}. We recall some of the important results provided in \cite{10}.

\begin{proposition}\label{Prop-1.3}{\rm \cite{10}}
Let $P_n$ denotes a path on $n\geq 3$ vertices. Then, $\J(P_n)=2$ and $\J^*(P_n)=3$.
\end{proposition}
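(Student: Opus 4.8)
The plan is to prove both equalities by the standard two-sided argument: a counting upper bound coming from the sizes of closed neighbourhoods, matched by an explicit periodic colouring realising the bound. Write $P_n = v_1 v_2 \cdots v_n$, so that the two endpoints $v_1, v_n$ have degree $1$ and every internal vertex $v_i$ (with $2 \le i \le n-1$) has degree $2$.

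For $\J(P_n) = 2$, I would first observe that $N[v_1] = \{v_1, v_2\}$ has exactly two elements. In a $J$-colouring every vertex yields a rainbow neighbourhood, so in particular all colours used must appear inside the two-element set $N[v_1]$; hence at most two colours are available and $\J(P_n) \le 2$. For the reverse inequality I would colour alternately, $c(v_i) = c_1$ for $i$ odd and $c(v_i) = c_2$ for $i$ even. This is proper, and since each vertex is adjacent to at least one vertex of the opposite parity, every closed neighbourhood contains both $c_1$ and $c_2$; thus each vertex yields a rainbow neighbourhood and $\J(P_n) \ge 2$. As $\chi(P_n) = 2$, this colouring is maximal, giving $\J(P_n) = 2$.

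For $\J^*(P_n) = 3$, the key change is that only internal vertices must yield rainbow neighbourhoods. Since $n \ge 3$, at least one internal vertex exists, and each internal vertex $v_i$ has $|N[v_i]| = 3$. For such a vertex to yield a rainbow neighbourhood all colours must occur among these three vertices, so at most three colours are used and $\J^*(P_n) \le 3$. To attain this, I would colour periodically by residue modulo $3$, that is $c(v_i) = c_1, c_2, c_3, c_1, c_2, c_3, \ldots$ as $i = 1, 2, 3, \ldots$. This colouring is proper, and any three consecutive vertices receive the three distinct colours $\{c_1, c_2, c_3\}$; hence $N[v_i]$ is rainbow for every internal $i$, giving $\J^*(P_n) \ge 3$ and therefore $\J^*(P_n) = 3$.

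The argument is essentially a counting exercise, so there is no serious obstacle; the only point requiring care is verifying that the two constructions genuinely qualify as \emph{maximal} proper colourings in the sense of the definitions. For the $2$-colouring this is clear since $\chi(P_n)=2$ already forces two colours, and for the periodic $3$-colouring maximality is immediate from the matching upper bound $\J^*(P_n)\le 3$: no proper colouring keeping all internal neighbourhoods rainbow can use a fourth colour. I would also note in passing that the boundary vertices need not yield rainbow neighbourhoods under a $J^*$-colouring, which is precisely what permits the jump from two colours to three.
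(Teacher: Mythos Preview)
Your argument is correct: the neighbourhood-size upper bounds together with the explicit alternating and period-$3$ colourings establish both equalities, and you handle the maximality clause appropriately.

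Note, however, that the present paper does not actually prove Proposition~\ref{Prop-1.3}; it is quoted from \cite{10} without proof, so there is no in-paper argument to compare against. Your approach is the natural one and almost certainly matches what appears in the cited source: bound the number of colours by the smallest relevant closed neighbourhood, then exhibit the obvious periodic colouring. One minor remark: your justification that the $3$-colouring is ``maximal'' appeals to the already-established upper bound $\J^*(P_n)\le 3$, which shows the \emph{number} of colours is maximal but is slightly circular as a verification that the colouring itself is maximal in the sense of the definition. A cleaner phrasing is simply that no vertex can be recoloured with a fourth colour while keeping the colouring proper and all internal neighbourhoods rainbow, since each internal $N[v_i]$ has size exactly~$3$. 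This is a cosmetic point and does not affect correctness.
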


\begin{theorem}\label{Thm-1.4}{\rm \cite{10}}
A cycle $C_n$ is $J$-colourable if and only if $n\equiv 0\,({\rm mod}\ 2)$ or $n\equiv 0\,({\rm mod}\ 3)$.
\end{theorem}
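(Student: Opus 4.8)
The plan is to exploit the fact that in $C_n$ every closed neighbourhood $N[v_i]=\{v_{i-1},v_i,v_{i+1}\}$ (subscripts read modulo $n$) contains exactly three vertices, so a rainbow neighbourhood can host at most three colours and hence any $J$-colouring of $C_n$ uses at most three colours. I would combine this ceiling with the standard values $\chi(C_n)=2$ for even $n$ and $\chi(C_n)=3$ for odd $n$ to pin down, in each parity class, exactly how many colours a $J$-colouring must use, and then read off the divisibility condition.

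For the sufficiency (``if'') direction I would treat the two congruence classes separately and simply exhibit colourings. When $n\equiv 0\,(\mathrm{mod}\ 2)$, colour the vertices alternately $c_1,c_2,c_1,c_2,\dots$; this is a proper $2$-colouring, and since each $v_i$ carries one colour while both its neighbours carry the other, $N[v_i]$ meets both colour classes, so every vertex yields a rainbow neighbourhood. When $n\equiv 0\,(\mathrm{mod}\ 3)$, colour the vertices by the repeating block $c_1,c_2,c_3$; because $3\mid n$ this wraps around consistently and remains proper, and each consecutive triple $\{v_{i-1},v_i,v_{i+1}\}$ receives all three colours, so again every vertex lies in a rainbow neighbourhood. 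Either construction certifies that $C_n$ is $J$-colourable.

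For the necessity (``only if'') direction I would argue the contrapositive: assume $n$ is odd and $3\nmid n$, and show no $J$-colouring exists. Since $n$ is odd, $\chi(C_n)=3$, so any proper colouring uses at least three colours; together with the ceiling above, a $J$-colouring would have to use exactly three colours, whence the rainbow condition forces every consecutive triple $\{v_{i-1},v_i,v_{i+1}\}$ to carry three distinct colours. The crux is then a propagation step: once $v_{i-1},v_i$ are coloured, the requirement that both $\{v_{i-1},v_i,v_{i+1}\}$ and $\{v_i,v_{i+1},v_{i+2}\}$ be rainbow forces $v_{i+1}$ to take the unique colour absent from $\{v_{i-1},v_i\}$ and hence $v_{i+2}$ to repeat the colour of $v_{i-1}$; iterating this shows the colouring must be $3$-periodic along the cycle.

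The main obstacle, and the step I would handle most carefully, is closing this periodicity argument around the cycle: a $3$-periodic pattern closes up consistently on $C_n$ precisely when $3\mid n$, so when $3\nmid n$ the forced colouring either fails to be proper at the point of closure or leaves some consecutive triple non-rainbow, contradicting the assumed $J$-colouring. I would make this explicit by tracking the colour of $v_n$ relative to $v_1$ and $v_2$ and exhibiting the offending vertex whose closed neighbourhood omits a colour. Combining the two directions yields the stated equivalence.
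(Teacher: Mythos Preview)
Your argument is sound, but there is nothing in this paper to compare it against: Theorem~\ref{Thm-1.4} is quoted from \cite{10} and is stated here without proof, so the paper itself supplies no argument for this result. Consequently I cannot judge whether your approach matches or differs from ``the paper's own proof''---there isn't one.

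On its merits your proposal is correct. The ceiling of three colours follows because $|N[v]|=3$ for every vertex of $C_n$ and a $J$-colouring requires every colour to appear in each closed neighbourhood; combined with $\chi(C_n)\in\{2,3\}$ this pins down the admissible palette sizes exactly as you describe. The two explicit periodic colourings handle sufficiency cleanly, and your propagation step for necessity is the right idea: once two consecutive vertices are coloured, the rainbow condition on overlapping triples forces the third colour and then periodicity with period~$3$, which closes up on $C_n$ only when $3\mid n$. The one place to be slightly careful is the word ``maximal'' in Definition~1.1; in this paper a $J$-colouring is simply a proper colouring in which every vertex yields a rainbow neighbourhood (with $\J(G)$ the largest number of colours achievable), so your constructions do qualify as $J$-colourings and no separate maximality check is needed.
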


\begin{corollary}\label{Cor-1.5}{\rm \cite{10}}
Let $C_n$ be a cycle which admits a $J$-colouring. Then, 
\begin{equation*}
\J(C_n)=
\begin{cases}
2 & \text{if}\ n\equiv 0\,({\rm mod}\ 2) \text{and}\ n\not\equiv 0\,({\rm mod}\ 3),\\
3 & \text{if}\ n\equiv 0\,({\rm mod}\ 3).
\end{cases}
\end{equation*}
\end{corollary}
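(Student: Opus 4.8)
The plan is to exploit the fact that in a cycle $C_n$ every vertex has degree two, so the closed neighbourhood $N[v]$ of any vertex $v$ consists of exactly three vertices: $v$ together with its two neighbours. Since a rainbow neighbourhood must contain a vertex of each colour used and $|N[v]|=3$, any $J$-colouring of $C_n$ can employ at most three colours. This gives the upper bound $\J(C_n)\le 3$ for free and shows that the two stated values $2$ and $3$ are the only candidates. I would therefore organise the argument around deciding, for each residue class of $n$, whether a proper colouring with two or with three colours can be realised as a $J$-colouring.

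First I would dispatch the two-colour situation. Label the vertices $v_0,v_1,\dots,v_{n-1}$ cyclically. When $n\equiv 0\,(\mathrm{mod}\ 2)$ the alternating colouring $c_1,c_2,c_1,c_2,\dots$ is proper, and for any vertex $v_i$ coloured $c_1$ both of its neighbours carry $c_2$ (and symmetrically), so $N[v_i]$ meets both colour classes. Hence every vertex yields a rainbow neighbourhood, this is a valid $J$-colouring with two colours, and $\J(C_n)\ge 2$.

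The heart of the proof, and the step I expect to be the main obstacle, is the three-colour case, where the rainbow condition becomes rigid. For a colouring with three colours to be a $J$-colouring, each $N[v_i]$ of size three must contain all three colours, so $c(v_{i-1})$, $c(v_i)$ and $c(v_{i+1})$ must be pairwise distinct for every $i$ (indices mod $n$). I would show this forces the colouring to be purely periodic: fixing $c(v_0)=c_1$ and $c(v_1)=c_2$, distinctness at $v_1$ forces $c(v_2)=c_3$, and inductively the pattern $c_1,c_2,c_3,c_1,c_2,c_3,\dots$ is uniquely determined around the cycle. For this pattern to close up consistently at $v_{n-1}$ and $v_0$ one needs $n\equiv 0\,(\mathrm{mod}\ 3)$; if $n\equiv 1$ or $2\,(\mathrm{mod}\ 3)$ the forced colour at $v_{n-1}$ either violates properness or destroys the rainbow property of $N[v_0]$. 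Conversely, when $3\mid n$ this periodic colouring is manifestly a proper $J$-colouring with three colours.

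Combining the cases completes the argument. When $n\equiv 0\,(\mathrm{mod}\ 3)$ the three-colour $J$-colouring exists and, by the upper bound $\J(C_n)\le 3$, is optimal, so $\J(C_n)=3$. When $n\equiv 0\,(\mathrm{mod}\ 2)$ but $n\not\equiv 0\,(\mathrm{mod}\ 3)$, no three-colour $J$-colouring can exist by the periodicity obstruction, while the alternating two-colouring does, so $\J(C_n)=2$. Since Theorem~\ref{Thm-1.4} guarantees that these are exactly the $J$-colourable cycles, the stated piecewise formula follows.
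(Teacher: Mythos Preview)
Your argument is correct and complete. Note, however, that the paper itself does not supply a proof of this corollary: it is merely recalled from \cite{10} as background, so there is no in-paper proof to compare against. Your approach---bounding the number of colours by $|N[v]|=3$, exhibiting the alternating $2$-colouring for even $n$, and showing by a forcing argument that a $3$-colour $J$-colouring is necessarily the periodic pattern $c_1,c_2,c_3,\dots$ and hence requires $3\mid n$---is exactly the natural direct proof one would expect in the source paper, and it stands on its own.
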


\begin{theorem}\label{Thm-1.6}{\rm \cite{10}}
Let $W_{n+1}=K_1+C_n$ be a wheel graph which admits a $J$-colouring. Then, 
\begin{equation*}
\J(W_{n+1})=
\begin{cases}
3 & \text{if}\ n\equiv 0\,({\rm mod}\ 2) \text{and}\ n\not\equiv 0\,({\rm mod}\ 3),\\
4 & \text{if}\ n\equiv 0\,({\rm mod}\ 3).
\end{cases}
\end{equation*}
\end{theorem}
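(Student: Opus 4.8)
The plan is to establish an upper bound valid for all wheels and then to give matching constructions (and one impossibility argument) in each of the two cases. Writing $u$ for the hub and $v_1,\dots,v_n$ for the cycle vertices (indices taken modulo $n$), I would first record the two structural facts that drive everything: each cycle vertex has closed neighbourhood $N[v_i]=\{u,v_{i-1},v_i,v_{i+1}\}$ of size $4$, and $u$ is adjacent to every cycle vertex. Since a $J$-colouring forces every colour to appear in $N[v_i]$, the first fact gives $\J(W_{n+1})\le 4$ immediately, while the second shows that in any proper colouring the colour of the hub is never repeated on the cycle.

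Next I would treat the case $n\equiv 0\,(\mathrm{mod}\ 3)$ by exhibiting a $4$-colouring. The idea is to colour the cycle periodically as $c_1,c_2,c_3,c_1,c_2,c_3,\dots$ (a proper colouring precisely because $3\mid n$) and to give the hub the fresh colour $c_4$. I would then check the $J$-property: every cycle vertex $v_i$ sees three consecutive cycle vertices carrying the full set $\{c_1,c_2,c_3\}$ together with the hub carrying $c_4$, and the hub sees the whole graph. Together with the upper bound this yields $\J(W_{n+1})=4$.

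For the case $n\equiv 0\,(\mathrm{mod}\ 2)$ with $n\not\equiv 0\,(\mathrm{mod}\ 3)$, I would first rule out $4$ colours. Suppose a $4$-colouring exists; the hub's colour, say $c_4$, occurs only on the hub, so the cycle realises exactly $\{c_1,c_2,c_3\}$, and the rainbow condition forces every window $v_{i-1},v_i,v_{i+1}$ to carry all three of $c_1,c_2,c_3$. Comparing the windows at $i$ and $i+1$ gives $c(v_{i+2})=c(v_{i-1})$, so the cycle colouring is periodic with period $3$; closing this pattern up around $C_n$ requires $3\mid n$, contradicting the hypothesis. Hence $\J(W_{n+1})\le 3$ in this case. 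A matching $3$-colouring is obtained by $2$-colouring the even cycle as $c_1,c_2,c_1,c_2,\dots$ and colouring the hub $c_3$: each cycle vertex then meets both $c_1$ and $c_2$ among its two cycle-neighbours and $c_3$ at the hub, giving a $J$-colouring and thus $\J(W_{n+1})=3$.

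The main obstacle I anticipate is the impossibility step in the second case, namely justifying cleanly that the local ``every three consecutive vertices are rainbow'' constraint propagates into a strict period-$3$ pattern and that the cyclic closure is inconsistent unless $3\mid n$. By contrast, the size-$4$ neighbourhood bound and the two explicit colourings are routine once the neighbourhood structure of the wheel has been spelled out.
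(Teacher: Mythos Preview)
Your argument is correct: the degree bound $\J(W_{n+1})\le 4$, the period-$3$ construction when $3\mid n$, the alternating construction when $2\mid n$, and the period-forcing impossibility step are all sound. In particular, the step you flagged as the potential obstacle is fine: from the two overlapping windows $\{v_{i-1},v_i,v_{i+1}\}$ and $\{v_i,v_{i+1},v_{i+2}\}$, both of which must realise $\{c_1,c_2,c_3\}$, the colour of $v_{i+2}$ is forced to be the unique colour missing from $\{c(v_i),c(v_{i+1})\}$, which is exactly $c(v_{i-1})$; this gives the rigid period-$3$ pattern, and cyclic closure then requires $3\mid n$.

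As for comparison with the paper: the paper does not prove this theorem at all. Theorem~\ref{Thm-1.6} is quoted from \cite{10} as a known result and is used as input for the later corollaries on $ren(W_{n+1})$. Your write-up therefore supplies a self-contained proof where the present paper only cites one; the natural benchmark is the original argument in \cite{10}, which proceeds along the same structural lines (the $J$-colouring of $W_{n+1}$ is governed by that of $C_n$ plus one extra colour for the hub), so your approach is the expected one rather than a genuinely different route.
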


\section{The Rainbow Neighbourhood Equate Number of a Graph}

In general, not all graphs admit a $J$-colouring. The question here is to find a maximal induced subgraph $H$ of $G$ such that $H$ admits a $J$-colouring. In view of this problem, we have the following definition.

\begin{definition}{\rm 
Let $G$ be a graph. Then, the \textit{rainbow neighbourhood equate number} of $G$ denoted by $ren(G)$ is the minimum number of vertices to be removed from $G$ such that the remaining induced subgraph $H$ admits a $J$-colouring.
}\end{definition}

That is, if the removal of the minimum number of vertices, say $k$, from $G$ such that the remaining induced subgraph $H$ of $G$ admits a $J$-colouring, then $ren(G)=k$. Note that for a graph $G$ which admits a $J$-colouring, $ren(G)=0$.

The existence of a maximal induced subgraph that admits a $J$-colouring is indeed guaranteed by the next result. 

\begin{theorem}\label{Thm-2.1}
For any graph $G$ a maximum induced subgraph $H$ exists such that $H$ admits a $J$-colouring.
\end{theorem}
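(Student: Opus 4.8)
The plan is to reduce the statement to a finiteness argument. First I would establish that the family of induced subgraphs of $G$ that admit a $J$-colouring is non-empty. The cleanest witness is a single vertex: if $v\in V(G)$ and we set $c(v)=c_1$, then $N[v]=\{v\}$ contains the only colour used, so $v$ yields a rainbow neighbourhood and the (trivially maximal proper) colouring of this $K_1$ is a $J$-colouring. More generally, any clique $K_m\subseteq G$ is $J$-colourable, since assigning distinct colours $c_1,\dots,c_m$ to its vertices makes every closed neighbourhood equal to the whole vertex set and hence rainbow. Thus the collection $\mathcal{H}$ of induced subgraphs of $G$ admitting a $J$-colouring contains at least every one-vertex subgraph and is therefore non-empty.

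Next, I would invoke finiteness. Since $G$ has finite order $\nu(G)=n$, every member of $\mathcal{H}$ has order in $\{1,2,\dots,n\}$, so the set of orders $\{\nu(H):H\in\mathcal{H}\}$ is a non-empty finite set of positive integers. Such a set attains its maximum, say at value $m$. Choosing any $H\in\mathcal{H}$ with $\nu(H)=m$ yields an induced subgraph of maximum order among those admitting a $J$-colouring, which is precisely the asserted maximum induced subgraph $H$.

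There is no serious obstacle here; the result is an existence statement that follows from the non-emptiness of $\mathcal{H}$ together with the finiteness of $G$. The only point requiring genuine care is the verification that at least one induced subgraph is $J$-colourable, and the single-vertex example settles this unconditionally, for \emph{every} graph $G$ and without reference to connectivity. I would close by remarking that the argument simultaneously pins down $ren(G)=\nu(G)-m$, tying the theorem directly to the invariant just defined, and that ``maximum'' (largest order) is genuinely stronger than merely ``maximal'' (admitting no $J$-colourable induced supergraph); it is the former, stronger conclusion that the finiteness argument delivers.
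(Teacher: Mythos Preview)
Your argument is correct and follows essentially the same two-step pattern as the paper's proof: exhibit one induced subgraph that is $J$-colourable, then appeal to finiteness to extract a maximum one. The only substantive difference is the choice of witness. The paper uses the standing hypothesis that $G$ is connected (hence has at least one edge) and takes the induced $K_2$ on an edge $uv$ as its $J$-colourable subgraph; you instead take a single vertex $K_1$, which works without any connectivity assumption. Your choice is marginally more general, while the paper's edge witness has the small payoff of immediately yielding the sharper bound $ren(G)\le \nu(G)-2$ that the paper records just after the theorem. Your explicit separation of ``maximum'' from ``maximal'' is also a welcome clarification that the paper's own proof leaves implicit.
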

\begin{proof}
Since $G$ is connected it has $\varepsilon(G) \geq 1$ so consider any edge $uv$. The induced subgraph $H' =\langle\{u,v\}\rangle$ admits the $J$-colouring $\mathcal{C} = \{c_1,c_2\}$. Since $H'$ admits, it follows by implication that a maximal induced subgraph $H$, possibly $H = H'$, that admits a $J$-colouring exists.
\end{proof}

In view of Theorem \ref{Thm-2.1}, for any graph $G$, we have $ren(G)\leq \nu(G)-2=n-2$. Furthermore, the theorem only implies that $\chi(H)\leq \chi(G)$ and not necessarily that $\chi(H)<\chi(G)$.

\begin{lemma}\label{Lem-2.3}
The fan graph $F_n=P_n+K_1$ has $\J(F_n)=3$.
\end{lemma}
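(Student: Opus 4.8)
The plan is to bracket $\J(F_n)$ between $3$ and $3$ by proving the two inequalities $\J(F_n)\le 3$ and $\J(F_n)\ge 3$ separately. Throughout, write $u$ for the hub vertex $K_1$ and $v_1,v_2,\ldots,v_n$ for the consecutive vertices of $P_n$, so that $u$ is adjacent to every $v_i$, while $v_i$ is adjacent to $v_{i-1}$ and $v_{i+1}$ along the path.

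For the upper bound I would first record the general principle that any $J$-colouring of a graph $G$ on colours $c_1,\ldots,c_k$ forces $k \le \delta(G)+1$, where $\delta(G)$ denotes the minimum degree: since each closed neighbourhood $N[v]$ must contain a vertex of every one of the $k$ colours, we need $k \le |N[v]| = d(v)+1$ for all $v$, and minimising over $v$ gives the claim. In $F_n$ (with $n\ge 2$, so that a triangle $u v_i v_{i+1}$ exists and $\chi(F_n)=3$) the path endpoints $v_1$ and $v_n$ have degree $2$, for instance $N[v_1]=\{v_1,v_2,u\}$, so $\delta(F_n)=2$ and hence $\J(F_n)\le 3$.

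For the lower bound I would exhibit an explicit proper $3$-colouring meeting the rainbow requirement at every vertex. Assign the hub the colour $c_3$ and colour the path in the alternating pattern $c(v_i)=c_1$ for odd $i$ and $c(v_i)=c_2$ for even $i$. This is proper, since consecutive path vertices receive different colours and $u$ carries a colour distinct from all path colours. To see that every closed neighbourhood is rainbow, note first that $N[u]=V(F_n)$ contains all three colours. For a path vertex $v_i$ the hub already supplies $c_3$, so it remains to find $c_1$ and $c_2$ among $v_i$ and its path-neighbours; for an interior vertex the three consecutive alternating labels on $v_{i-1},v_i,v_{i+1}$ furnish both, and for an endpoint the single path-neighbour carries the colour opposite to that of the endpoint. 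Hence every vertex lies in a rainbow neighbourhood and $\J(F_n)\ge 3$.

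The two bounds together give $\J(F_n)=3$. I expect the only delicate point to be the verification at the two path endpoints, where the closed neighbourhood has exactly three vertices and therefore leaves no slack: the alternation must be arranged so that $v_1,v_2$ and, symmetrically, $v_{n-1},v_n$ carry the two distinct path colours, which the chosen parity pattern guarantees for every $n$. One should also confirm that this $3$-colouring is maximal in the sense demanded by the definition of a $J$-colouring, but since the upper bound already caps the number of colours at $3$, no colouring on more colours can exist and the construction is automatically optimal.
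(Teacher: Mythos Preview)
Your proof is correct. The paper's own argument is different and much terser: it simply invokes Proposition~\ref{Prop-1.3}, which records $\J(P_n)=2$. The implicit mechanism is that $F_n=P_n+K_1$, and joining a universal vertex to a graph admitting a $J$-colouring on $t$ colours yields a $J$-colouring on $t+1$ colours (assign the hub a fresh colour; every closed neighbourhood then contains that colour as well), while conversely any $J$-colouring of $P_n+K_1$ on $k$ colours restricts, after deleting the hub and its necessarily unique colour, to a $J$-colouring of $P_n$ on $k-1$ colours. Hence $\J(F_n)=\J(P_n)+1=3$.

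Your route bypasses the path result entirely: the general bound $\J(G)\le\delta(G)+1$ gives the upper bound, and the explicit alternating colouring gives the lower. This makes your argument self-contained and elementary, whereas the paper's approach is modular, reducing the fan to the already-settled path case. Both arguments ultimately hinge on the same phenomenon at the endpoints $v_1,v_n$, where the closed neighbourhood has exactly three vertices; you verify this by hand, while in the paper's framing it is packaged inside the statement that $P_n$ is $J$-colourable with two colours.
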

\begin{proof}
The result follows immediately from Proposition \ref{Prop-1.3}.
\end{proof}

In view of Proposition \ref{Prop-1.3} and Theorem \ref{Thm-1.4} and Theorem \ref{Thm-1.6}, the next results follow easily.

\begin{corollary}\label{Cor-2.4}
\begin{enumerate}\itemsep0mm
\item[(a)] Let $P_n$ denotes a path on $n\geq 3$ vertices. Then, $ren(P_n) = 0$.
\item[(b)] Let $C_n$ be a cycle with $n \not \equiv 0\,({\rm mod}\ 2)$ and $n \not \equiv 0\,({\rm mod}\ 3)$. Then, $ren(C_n) = 1$, else $ren(C_n) = 0$.
\item[(c)] Let $W_{n+1}$ be a wheel graph with $n \not \equiv 0\,({\rm mod}\ 2)$ and $n \not \equiv 0\,({\rm mod}\ 3)$. Then, $ren(W_{n+1}) = 1$, else $ren(W_{n+1}) = 0$.
\end{enumerate}
\end{corollary}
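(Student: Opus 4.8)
The plan is to handle the three parts with a common two-sided scheme: $ren(G)=0$ precisely when $G$ itself admits a $J$-colouring, and in every remaining case I will prove $ren(G)=1$ by deleting a single well-chosen vertex and checking that the resulting induced subgraph is $J$-colourable. Part (a) needs no deletion at all: Proposition \ref{Prop-1.3} gives $\J(P_n)=2$ for $n\geq 3$, so $P_n$ admits a $J$-colouring and $ren(P_n)=0$ directly from the definition.

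For part (b) I would split on the criterion of Theorem \ref{Thm-1.4}. If $n\equiv 0\pmod 2$ or $n\equiv 0\pmod 3$ then $C_n$ is $J$-colourable, giving the value $ren(C_n)=0$ in the ``else'' case. Otherwise $n\not\equiv 0\pmod 2$ and $n\not\equiv 0\pmod 3$, so $n\geq 5$ and, by Theorem \ref{Thm-1.4}, $C_n$ has no $J$-colouring; hence $ren(C_n)\geq 1$. Deleting any one vertex of $C_n$ leaves the path $P_{n-1}$ with $n-1\geq 4\geq 3$, which is $J$-colourable by Proposition \ref{Prop-1.3}, so $ren(C_n)\leq 1$ and therefore $ren(C_n)=1$.

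Part (c) follows the same template with the wheel results. For the ``else'' branch I would exhibit a $J$-colouring explicitly: colour the hub $c_1$ and alternate $c_2,c_3$ around the rim when $n$ is even, or use the period-three pattern $c_2 c_3 c_4 c_2 c_3 c_4\cdots$ when $n\equiv 0\pmod 3$. In either construction each rim vertex, together with its two rim-neighbours and the hub, realises all the colours while the hub sees the entire graph, so the colouring is a $J$-colouring and $ren(W_{n+1})=0$. When $n$ is odd and $3\nmid n$ the wheel is not $J$-colourable (the only admissible cases in Theorem \ref{Thm-1.6} are $n$ even and $n\equiv 0\pmod 3$), so $ren(W_{n+1})\geq 1$; deleting one rim vertex leaves the fan $F_{n-1}=P_{n-1}+K_1$, which satisfies $\J(F_{n-1})=3$ by Lemma \ref{Lem-2.3} and is thus $J$-colourable, giving $ren(W_{n+1})\leq 1$ and hence equality.

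The colour-counting verifications are routine; the step that deserves genuine care is the lower bound $ren(W_{n+1})\geq 1$ when $n$ is odd and $3\nmid n$, i.e.\ confirming that such a wheel admits no $J$-colouring at all. A self-contained way to see this is to observe that the hub is adjacent to every rim vertex, so its colour is unique in any proper colouring, and each rim vertex then has a closed neighbourhood of size four; this caps any $J$-colouring at four colours, while $\chi(W_{n+1})=4$ for odd $n$ forces exactly four. The rainbow condition at each rim vertex now demands that every three consecutive rim vertices carry the three non-hub colours, which is realisable only when $3\mid n$---contradicting $3\nmid n$. This is the crux of the whole corollary and the single place where the arithmetic hypothesis is genuinely used.
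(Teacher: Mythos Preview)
Your proof is correct and follows essentially the same route as the paper: part (a) via Proposition~\ref{Prop-1.3}, part (b) by deleting one vertex of a non-$J$-colourable cycle to obtain a path, and part (c) by deleting one rim vertex to obtain the fan $F_{n-1}$ and invoking Lemma~\ref{Lem-2.3}. Your write-up is in fact more careful than the paper's, since you explicitly justify the lower bounds $ren\geq 1$ and give a self-contained argument that $W_{n+1}$ admits no $J$-colouring when $n$ is odd and $3\nmid n$, whereas the paper leaves these points to the cited results.
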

\begin{proof}
\begin{enumerate}\itemsep0mm
\item[(a)] Since $P_n$ admits a $J$-colouring for all $n \geq 1$, the result is trivial.
\item[(b)] Since $C_n,\ n \not \equiv 0\,({\rm mod}\ 2)$ and $n \not \equiv 0\,({\rm mod}\ 3)$ has order $n \geq 3$ the removal of any vertex results in a path $P_{n-1}$. Hence, the result from part (a). Else, $C_n$ admits a $J$-colouring.
\item[(c)] Since $W_{n+1},\ n \not \equiv 0\,({\rm mod}\ 2)$ and $n \not \equiv 0\,({\rm mod}\ 3)$ has order $n \geq 4$ the removal of any cycle vertex results in a fan graph $F_{n-1}$ hence, the result from Lemma \ref{Lem-2.3}. Else, $W_{n+1}$ admits a $J$-colouring.
\end{enumerate}
\end{proof}

\noi We recall an important result from \cite{8}.

\begin{theorem}\label{Thm-2.5}{\rm \cite{8}}
Irrespective of whether a graph $G$ admits a $J$-colouring or not , its Mycielskian graph $\mu(G)$ does not have a $J$-colouring.
\end{theorem}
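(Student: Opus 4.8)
The plan is to argue by contradiction, exploiting the rigid local structure of the Mycielskian. Recall that $\mu(G)$ is obtained from $G$ with vertex set $\{v_1,\dots,v_n\}$ by adding a \emph{shadow} vertex $u_i$ for each $v_i$, where $N(u_i)=N_G(v_i)\cup\{w\}$, together with a single \emph{apex} vertex $w$ adjacent to every $u_i$. Suppose $\mu(G)$ admits a $J$-colouring using the colour set $\mathcal{C}=\{c_1,\dots,c_\ell\}$, and write $c_a=c(w)$. The whole argument is driven by one observation: since every shadow vertex is adjacent to $w$, no shadow vertex carries the colour $c_a$.

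First I would record two facts. \emph{(Fact 1)} Because $u_i$ must yield a rainbow neighbourhood and $N[u_i]=\{u_i,w\}\cup N_G(v_i)$, the only colours supplied internally are $c(u_i)$ and $c_a$, so $N_G(v_i)$ must realise every colour of $\mathcal{C}\setminus\{c(u_i),c_a\}$. Since $v_i$ is adjacent in $G$ to every vertex of $N_G(v_i)$, its own colour is absent from $c(N_G(v_i))$; combining these yields the pivotal relation $c(v_i)\in\{c(u_i),c_a\}$ for every $i$. \emph{(Fact 2)} Every original vertex must see $c_a$ in its closed neighbourhood; as no shadow is coloured $c_a$ and $w\notin N[v_i]$, the colour $c_a$ must appear on $v_i$ itself or on an original neighbour. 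Hence some original vertex carries $c_a$; fix such a $v_i$ with $c(v_i)=c_a$.

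Now I would derive the contradiction at $v_i$. Its shadow $u_i$ has a colour $c(u_i)=:c_b$ with $c_b\neq c_a$. For $v_i$ to yield a rainbow neighbourhood it must see $c_b$; but $c(v_i)=c_a\neq c_b$, and no original neighbour of $v_i$ can be coloured $c_b$ (each such neighbour is adjacent to $u_i$). Thus $c_b$ can only be contributed by a shadow neighbour, so there exists $v_j\in N_G(v_i)$ with $c(u_j)=c_b$. Since $v_j\sim v_i$ we get $c(v_j)\neq c_a$, and since $v_j\in N_G(v_i)$ makes $u_i$ adjacent to $v_j$ we also get $c(v_j)\neq c(u_i)=c_b$; hence $c(v_j)\notin\{c_a,c_b\}$. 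Applying Fact 1 to $v_j$, however, gives $c(v_j)\in\{c(u_j),c_a\}=\{c_b,c_a\}$, a direct contradiction. Therefore $\mu(G)$ admits no $J$-colouring, regardless of whether $G$ is $J$-colourable.

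I expect the main obstacle to be isolating Fact 1, namely the two-colour constraint $c(v_i)\in\{c(u_i),c_a\}$: this is the step that converts the loose ``rainbow'' requirements into an exact statement, and applying it to the neighbour $v_j$ rather than to $v_i$ is what forces the collapse. A secondary point to treat carefully is the degenerate boundary case in which $G$ has no edge (so $\mu(G)$ is disconnected), which should be excluded so that the vertex $v_i$ produced by Fact 2 genuinely has a neighbour to invoke in the final step.
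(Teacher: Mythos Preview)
The paper does not prove Theorem~\ref{Thm-2.5}; it is quoted verbatim from \cite{8} and used as a black box for Corollary~\ref{Thm-2.6}, so there is no in-paper argument to compare against. Your proposal, however, is a correct and complete proof. The key step is your Fact~1, the constraint $c(v_i)\in\{c(u_i),c_a\}$ forced by the rainbow condition at the shadow vertex $u_i$; once that is isolated, choosing an original vertex $v_i$ with $c(v_i)=c_a$ (guaranteed by your Fact~2) and chasing the colour $c_b=c(u_i)$ through $N[v_i]$ produces a neighbour $v_j$ that simultaneously must and cannot lie in $\{c_a,c_b\}$. All adjacency claims you use (in particular that every $v_j\in N_G(v_i)$ is adjacent to $u_i$, and that $u_i\notin N[v_i]$) follow directly from the definition of the Mycielskian. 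Your caveat about the edgeless boundary case is exactly right: under the paper's standing hypothesis that $G$ is connected with $\nu(G)\ge 2$, every original vertex has a neighbour and the argument goes through unchanged, while for $G=K_1$ the Mycielskian is disconnected and the statement should be read modulo that convention.
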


\begin{corollary}\label{Thm-2.6}
If a graph $G$ admits a $J$-colouring, its Mycielskian graph has, $ren(\mu(G))=1$.
\end{corollary}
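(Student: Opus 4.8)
The plan is to prove this corollary by combining two facts: Theorem~\ref{Thm-2.5}, which tells us that $\mu(G)$ never admits a $J$-colouring (so $ren(\mu(G))\geq 1$), and a direct construction showing that removing a single well-chosen vertex from $\mu(G)$ yields an induced subgraph that does admit a $J$-colouring (so $ren(\mu(G))\leq 1$). Together these give $ren(\mu(G))=1$. First I would recall the structure of the Mycielskian: if $V(G)=\{v_1,\dots,v_n\}$, then $\mu(G)$ has vertex set $V(G)\cup\{u_1,\dots,u_n\}\cup\{w\}$, where each shadow vertex $u_i$ is adjacent to the neighbours of $v_i$ in $G$, and the apex vertex $w$ is adjacent to every shadow vertex $u_i$.

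The lower bound is immediate: by Theorem~\ref{Thm-2.5}, $\mu(G)$ itself has no $J$-colouring, so at least one vertex must be removed, giving $ren(\mu(G))\geq 1$. For the upper bound, the natural candidate to remove is the apex vertex $w$. I would argue that the induced subgraph $\mu(G)-w$ admits a $J$-colouring whenever $G$ does. The key observation is that $\mu(G)-w$ consists of the original copy of $G$ together with the independent set of shadow vertices $\{u_1,\dots,u_n\}$, where each $u_i$ sees exactly $N_G(v_i)$. Given a $J$-colouring of $G$ with colour set $\{c_1,\dots,c_{\J(G)}\}$, I would colour each shadow vertex $u_i$ with the colour of its original $v_i$, i.e. $c(u_i)=c(v_i)$. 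Since the $u_i$ form an independent set and $u_i$ is adjacent only to vertices in $N_G(v_i)$, whose colours differ from $c(v_i)$, this is a proper colouring.

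The main verification — and the step I expect to be the real obstacle — is checking that this colouring is a genuine $J$-colouring of $\mu(G)-w$, namely that every vertex's closed neighbourhood is rainbow. For an original vertex $v_i$, its closed neighbourhood in $\mu(G)-w$ contains $N_G[v_i]$ (which is already rainbow in the $J$-colouring of $G$), so $v_i$ still yields a rainbow neighbourhood. For a shadow vertex $u_i$, I would observe that $N[u_i]=\{u_i\}\cup N_G(v_i)$, and since $c(u_i)=c(v_i)$ while $\{v_i\}\cup N_G(v_i)=N_G[v_i]$ was rainbow under the original $J$-colouring, the colour $c(v_i)=c(u_i)$ plus the colours appearing on $N_G(v_i)$ together realise every colour in $\{c_1,\dots,c_{\J(G)}\}$; hence $N[u_i]$ is rainbow as well. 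One must also confirm that the colouring remains \emph{maximal proper} (no colour can be dropped), which follows because the colouring already uses exactly the colours of a $J$-colouring of $G$ on each relevant neighbourhood.

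Thus $\mu(G)-w$ admits a $J$-colouring, giving $ren(\mu(G))\leq 1$, and combined with the lower bound we conclude $ren(\mu(G))=1$. The delicate point throughout is ensuring the shadow vertices inherit rainbow neighbourhoods from the original $J$-colouring of $G$; everything else is routine once the Mycielskian adjacency structure is made explicit.
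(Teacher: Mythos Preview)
Your proposal is correct and follows essentially the same approach as the paper: remove the apex (root) vertex $w$ so that $\mu(G)-w$ is the shadow graph $s(G)$, then colour each shadow vertex $u_i$ with the colour of $v_i$ to obtain a $J$-colouring of $s(G)$; the lower bound comes from Theorem~\ref{Thm-2.5}. The paper's proof is much terser (it simply asserts that $s(G)$ admits a $J$-colouring ``with respect to the same colouring of $G$''), whereas you spell out the rainbow-neighbourhood verification for both original and shadow vertices, but the underlying argument is identical.
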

\begin{proof}
When we remove the root vertex of $\mu(G)$, the graph will be reduced to the shadow graph $s(G)$ of a $G$. Clearly, $s(G)$ admits a $J$-colouring with respect to the same colouring of $G$. Therefore, $ren(\mu(G))=1$.
\end{proof}

Note that the result follows directly from the proof of Theorem 2.8 in [8]. Now, recall the definition of a Jahangir graph.

\begin{definition}{\rm \cite{9}
A \textit{Jahangir graph} $J_{n,m},\ n\geq 1$ and $m \geq 3$, is a graph on $nm+1$ vertices consisting of the cycle $C_{nm}$ with an additional central vertex say $u$ which is adjacent to cyclically labeled vertices $v_1, v_2, v_3,\ldots,v_m$ such that $d(v_i,v_{i+1})=n,\ 1\leq i \leq m-1$ in $C_{nm}$.
}\end{definition}

The following result discusses the rainbow neighbourhood equate number of Jahangir graphs.

\begin{theorem}
If a Jahangir graph $J_{n,m}$ does not admit a $J$-colouring, then we have
\begin{equation*}
ren(J_{n,m})=
\begin{cases}
1, & \text{if $C_{nm}$ admits a  $J$-colouring};\\
2, & \text{otherwise}.
\end{cases}
\end{equation*}
\end{theorem}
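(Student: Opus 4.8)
The plan is to separate the two cases of the statement and, in each, bound $ren(J_{n,m})$ from above by deleting an explicit set of vertices that yields a $J$-colourable graph, and from below by ruling out every smaller deletion. The standing hypothesis that $J_{n,m}$ admits no $J$-colouring already forces $ren(J_{n,m})\geq 1$, so in the first case it remains only to exhibit a single good vertex, whereas in the second case the entire difficulty lies in the lower bound $ren(J_{n,m})\geq 2$.

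For the upper bounds I would always begin by deleting the central vertex $u$, since $\langle V(J_{n,m})\setminus\{u\}\rangle$ is precisely the cycle $C_{nm}$. If $C_{nm}$ admits a $J$-colouring, this one deletion suffices and $ren(J_{n,m})\leq 1$; together with $ren(J_{n,m})\geq 1$ this disposes of the first case. If instead $C_{nm}$ is not $J$-colourable, I would delete $u$ together with any one cycle vertex $w$; the remaining induced graph is the path $P_{nm-1}$, which is $J$-colourable by Corollary~\ref{Cor-2.4}, giving $ren(J_{n,m})\leq 2$.

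The core of the argument is the lower bound $ren(J_{n,m})\geq 2$ in the second case, namely that no single deletion leaves a $J$-colourable graph. Deleting $u$ returns $C_{nm}$, which is excluded by Theorem~\ref{Thm-1.4}. For the deletion of a cycle vertex I would use a degree-versus-chromatic-number obstruction. The spokes cut $C_{nm}$ into $m$ arcs, each carrying $n-1$ internal vertices of degree two, and each petal $u\,v_i\cdots v_{i+1}\,u$ is a cycle of length $n+2$. Since the hypothesis makes $n$ odd, every petal is an odd cycle; and since $m\geq 3$ while any deleted vertex meets at most two petals, at least one petal survives, so the resulting graph $H$ still contains an odd cycle and $\chi(H)\geq 3$. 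On the other hand, whichever cycle vertex is removed, at least one of its cycle-neighbours is a degree-two vertex of $J_{n,m}$ and becomes a pendant vertex of $H$. A pendant vertex lies in a rainbow neighbourhood only when at most two colours are used in total, which is incompatible with $\chi(H)\geq 3$; hence $H$ admits no $J$-colouring, and combining the two bounds gives $ren(J_{n,m})=2$.

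The step I expect to demand the most care is verifying these two structural facts uniformly over all admissible $(n,m)$ in the second case. The conditions $nm\not\equiv 0\pmod 2$ and $nm\not\equiv 0\pmod 3$ force $n$ and $m$ to be odd with $3\nmid n$, so that (the wheel case $n=1$ being covered separately by Corollary~\ref{Cor-2.4}) one has $n\geq 5$; each arc then contains several internal degree-two vertices and each petal has odd length $n+2$. With this in hand I would separate the removal of a spoke $v_i$, whose two internal cycle-neighbours drop to degree one, from the removal of an internal arc vertex, which likewise strands an adjacent degree-two vertex as a pendant. Once the creation of a pendant and the survival of an odd petal are confirmed in every subcase, the upper and lower bounds meet and the dichotomy aligns exactly with the $J$-colourability of $C_{nm}$ furnished by Theorem~\ref{Thm-1.4}.
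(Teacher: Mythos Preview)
Your upper-bound constructions coincide with the paper's: delete the centre $u$ to reach $C_{nm}$ in case~(i), and delete $u$ together with one cycle vertex to reach $P_{nm-1}$ in case~(ii). Where you diverge is that you actually supply the lower bound $ren(J_{n,m})\geq 2$ in case~(ii), whereas the paper's proof records only the two deletions and declares the result ``immediate'' --- it never argues that a single deletion cannot work. Your pendant-plus-odd-petal argument fills this gap cleanly: the parity constraint $2\nmid nm$, $3\nmid nm$ forces $n$ odd with $3\nmid n$, so each petal $u\,v_i\cdots v_{i+1}\,u$ has odd length $n+2$; with $m\geq 3$ at least one petal survives any single cycle-vertex deletion, pinning $\chi(H)\geq 3$; and since (for $n\geq 5$) every cycle vertex has a degree-$2$ cycle-neighbour, the deletion always strands a pendant whose closed neighbourhood can realise at most two colours. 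That is a genuine strengthening of the paper's argument.

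Your caveat about $n=1$ is also well placed: in that case $J_{1,m}=W_{m+1}$, removing a rim vertex leaves the fan $F_{m-1}$, no pendant is created, and Corollary~\ref{Cor-2.4}(c) gives $ren=1$, which in fact conflicts with the value $2$ asserted by the theorem as stated. So your decision to set the wheel case aside is not merely a convenience for your pendant argument --- it isolates a boundary case the theorem does not handle correctly.
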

\begin{proof}
\begin{enumerate}\itemsep0mm
\item[(i)] If $C_{mn}$ admits a $J$-colouring, then since $J_{n,m}-u=C_{nm}$, the result is immediate.
\item[(ii)] If $C_{mn}$ admits a $J$-colouring, then for any $i$, $J_{n,m}-\{v_{i},u\}=P_{nm-1}$. Hence, the result is immediate.
\end{enumerate}
\end{proof}

\section{Some General Results}

In this section, we determine the rainbow neighbourhood equate number of two operations between the graphs $G$ and $H$. First, consider the join of two graphs.  

\begin{theorem}
Consider graphs $G$ and $H$. Then, $ren(G+H)=ren(G)+ren(H)$.
\end{theorem}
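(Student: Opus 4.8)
The plan is to exploit the rigid colour structure that the join forces on any proper colouring. The starting point is the observation that in $G+H$ every vertex of $G$ is adjacent to every vertex of $H$, so in any proper colouring of $G+H$ the colours appearing on $V(G)$ are disjoint from those appearing on $V(H)$; equivalently, each colour class lies entirely inside $V(G)$ or entirely inside $V(H)$. Combined with the neighbourhood identities $N_{G+H}[v]=N_G[v]\cup V(H)$ for $v\in V(G)$ and $N_{G+H}[w]=V(G)\cup N_H[w]$ for $w\in V(H)$, this is the engine of the whole argument: a vertex of $G$ already sees, through the whole of $V(H)$, every colour used on the $H$-side, so it yields a rainbow neighbourhood in $G+H$ precisely when it already sees every $G$-colour inside $N_G[v]$, i.e. precisely when it yields a rainbow neighbourhood in $G$ under the restricted colouring (and symmetrically for $H$).

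First I would record this as an equivalence: $G+H$ admits a $J$-colouring if and only if both $G$ and $H$ do. For the ``if'' direction I would take $J$-colourings of $G$ and of $H$ on disjoint palettes and overlay them; the result is proper, and by the neighbourhood identities every vertex of the join yields a rainbow neighbourhood, so the overlay is a $J$-colouring of $G+H$. For the ``only if'' direction I would start from a $J$-colouring of $G+H$ and restrict it to $V(G)$ and to $V(H)$; because colour classes do not cross the join, these restrictions are proper colourings of $G$ and $H$, and the neighbourhood identities show each vertex still yields a rainbow neighbourhood within its own factor. If one insists on the maximality clause in the definition, the same bookkeeping in fact yields $\J(G+H)=\J(G)+\J(H)$, but for $ren$ only the existence of such a colouring is needed.

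Next I would upgrade this to hold after vertex deletions. The key bookkeeping fact is that deleting a set $R$ of vertices from a join respects the join structure: writing $R_G=R\cap V(G)$ and $R_H=R\cap V(H)$, we have $(G+H)-R=(G-R_G)+(H-R_H)$. Applying the equivalence above to this smaller join, $(G+H)-R$ admits a $J$-colouring if and only if both $G-R_G$ and $H-R_H$ admit $J$-colourings. Since $|R|=|R_G|+|R_H|$ and the two conditions are completely decoupled, with one constraining only $R_G$ and the other only $R_H$, minimising $|R|$ splits into two independent minimisations, giving $ren(G+H)=ren(G)+ren(H)$. Concretely, the inequality $\leq$ comes from overlaying optimal deletion sets for $G$ and $H$, while the inequality $\geq$ comes from restricting an optimal deletion set for $G+H$ to each factor.

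I expect the main obstacle to be the restriction (``only if'') direction of the equivalence, where one must verify that each vertex of a factor still witnesses all of that factor's colours inside its smaller neighbourhood once the $V(H)$ (respectively $V(G)$) part of the neighbourhood is stripped away; this is exactly where the disjointness of the two palettes and the neighbourhood identities must be combined carefully, and it is also the step that secures the harder inequality $ren(G+H)\geq ren(G)+ren(H)$. The remaining care is purely with degenerate cases, such as a factor shrinking to a single vertex or to the empty graph, which are controlled by the bound $ren\leq \nu-2$ established after Theorem \ref{Thm-2.1}, ensuring that each factor retains enough vertices for the join identity to be applied.
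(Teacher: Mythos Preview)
Your proposal is correct and rests on the same structural observation the paper uses---namely that in any proper colouring of $G+H$ each colour class lies entirely inside one factor, so the rainbow-neighbourhood condition decouples across $V(G)$ and $V(H)$. The paper argues this rather informally: it fixes a $\chi$-colouring split across the two factors and notes that a vertex $v'\in V(G)$ which must be removed for $G$ to become $J$-colourable still fails to yield a rainbow neighbourhood in $G+H$, and then asserts the conclusion. You instead isolate the clean equivalence ``$G+H$ admits a $J$-colouring iff both factors do'', pair it with the deletion identity $(G+H)-R=(G-R_G)+(H-R_H)$, and read off both inequalities by decoupling the minimisation. This is a tighter organisation: the $\geq$ direction, which the paper only gestures at, becomes an immediate consequence of restricting an optimal $R$ to each factor, and your handling of the degenerate cases via $ren\le\nu-2$ closes the loophole where one factor might be deleted entirely. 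The two arguments are the same idea at heart; yours simply makes each step explicit.
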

\begin{proof}
Let $\chi(G)=k$ and $\chi(H)=\ell$. Without loss of generality, assume that a $\chi$-colouring of $G+H$ corresponds to $c(v) \in \{c_1,c_2,c_3,\ldots,c_k\},\ v \in V(G)$, and $c(u) \in \{c_{k+1}, c_{k+2}, c_{k+3}, \ldots, c_{k+\ell} \},\ u \in V(H)$. Let vertex $v'\in V(G)$ be a vertex to be removed amongst those required to result in an induced subgraph $G'$ which admits a $J$-colouring, if such vertex exists. In the graph $G+H$ vertex $v'$ does not (cannot) yield a rainbow neighbourhood. Hence, one of the minimum number of vertices is to be removed from $V(G)$. Similar reasoning applies to say vertex $u'\in V(H)$. Therefore, $ren(G+H)=ren(G)+ren(H)$.
\end{proof}

\noi Next to be considered is the corona of two graphs. First, we introduce a new distance measure as follows:
 
\begin{definition}\label{Def-3.1}{\rm 
Let $\mathcal{C}$ be a chromatic colouring of a graph $G$. The number of distinct colours in $N[v],\ v \in V(G)$ is called the \textit{chromatic degree} of $v$, and is denoted by $d_\chi(v)$. The maximum and minimum chromatic degrees of $G$ are defined as, $\Delta_\chi(G)=\max\{d_\chi(v): v\in V(G)\}$ and $\delta_\chi(G)=\min\{d_\chi(v): v\in V(G)\}$.
}\end{definition}

\begin{definition}{\rm 
The \textit{chromatic diameter} of graph $G$, denoted by $d(G,\chi)$, is defined as $d(G,\chi) = \chi(G)-\delta_\chi(G)$.
}\end{definition}

As much as the result for the join $G + H$ is straight forward the corona serves to show how complex graph operations can render the problem in general. Note that three parameters are key to the consideration of the corona operation. For the two graphs $G$ and $H$ in $G\circ H$ the parameters $\chi(G)$, $\chi(H)$, order of graphs $G$, $H$, and whether or $G$ and/or $H$ are $J$-colourable are all relevant. It is obvious that $ren(K_1\circ K_1)=0$.

\begin{lemma}\label{Lem-3.4}
Consider graphs $G$ and $H$ of order $n \geq 2$ and $m \geq 2$, and $\chi(G) = k$, $\chi(H) = \ell$, respectively. Also let both graphs be chromatically coloured. Then: 
\begin{equation*}
ren(G\circ H)=
\begin{cases}
0, & \text{if $d(H,\chi)=0$ and $\ell\ge k-1$},\\
(m+1)\cdot\sum\limits_{i=\ell +2}^{k}\theta(c_i), & \text{if $d(H,\chi)=0$ and $\ell<k-1$}.
\end{cases}
\end{equation*}
\end{lemma}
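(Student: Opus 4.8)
Throughout, the plan is to exploit the structure of the corona: $G\circ H$ consists of one copy of $G$ together with $n$ disjoint copies $H_1,\dots,H_n$ of $H$, where each $v_i\in V(G)$ is joined to every vertex of $H_i$. First I would record two facts. Since $d(H,\chi)=0$ forces $\delta_\chi(H)=\Delta_\chi(H)=\chi(H)=\ell$, the given chromatic colouring of $H$ is already a $J$-colouring in which every vertex sees all $\ell$ colours inside its closed neighbourhood. Moreover $\chi(G\circ H)=\max\{k,\ell+1\}$, and the only vertices that can fail the rainbow condition are the $w\in V(H_i)$, since $N[w]=N_{H_i}[w]\cup\{v_i\}$ is confined to a single copy plus its apex, whereas every apex $v_i$ already sees all colours occurring on $H_i$.

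For the branch $\ell\ge k-1$ (so $\chi(G\circ H)=\ell+1$) I would exhibit an explicit $J$-colouring on $\ell+1$ colours, which gives $ren(G\circ H)=0$ at once. Colour $G$ by its chromatic colouring; since $k\le\ell+1$ this uses colours from $\{c_1,\dots,c_{\ell+1}\}$. For each $i$ the set $\{c_1,\dots,c_{\ell+1}\}\setminus\{c(v_i)\}$ has exactly $\ell$ elements, so I would relabel the $\ell$-colour $J$-colouring of $H$ by this set and apply it to $H_i$. Then each $w\in V(H_i)$ sees the $\ell$ colours of $H_i$ inside $N_{H_i}[w]$ together with $c(v_i)$ from the apex, i.e.\ all $\ell+1$ colours, while each $v_i$ sees its own colour together with the $\ell$ colours on $H_i$. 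Every vertex thus yields a rainbow neighbourhood, the colouring is a $J$-colouring, and $ren(G\circ H)=0$.

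For the branch $\ell<k-1$, where $\chi(G\circ H)=k$, I would prove two matching inequalities. For the upper bound, delete every cone $\{v_i\}\cup V(H_i)$ whose apex carries one of the colours $c_{\ell+2},\dots,c_k$ in the chromatic colouring of $G$; this removes exactly $(m+1)\sum_{i=\ell+2}^{k}\theta(c_i)$ vertices and leaves the corona $G'\circ H$, where $G'$ is the subgraph of $G$ induced by the surviving apexes and satisfies $\chi(G')\le\ell+1$. The construction of the first branch then $J$-colours $G'\circ H$, so $ren(G\circ H)\le(m+1)\sum_{i=\ell+2}^{k}\theta(c_i)$.

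The main obstacle is the reverse inequality, namely that no cheaper deletion suffices. The plan is to argue that each ``high'' cone must be removed in full. Since a bottleneck vertex $w\in V(H_i)$ can see at most $d_H(w)+2$ colours, the retained subgraph cannot be $J$-coloured on $k$ colours while a high-coloured apex survives; hence the chromatic number of the remainder must drop, which in turn forces removal of the high apexes. On the other hand, deleting an apex $v_i$ alone detaches $H_i$ as an isolated copy of $H$, and a disconnected graph is $J$-colourable only when all its components admit one common number of colours. Combining these observations should rule out retaining any nonempty proper part of a high cone and force all $m+1$ of its vertices to be deleted, matching the upper bound. I expect the delicate point to be exactly this colour-count bookkeeping across the (possibly disconnected) remainders: one must check that the isolated copies produced by partial deletions cannot be reconciled with the $\ell+1$ colours still demanded by the surviving cones — a step that becomes subtle precisely when $\J(H)$ exceeds $\ell$, and which I would treat as the crux of the argument.
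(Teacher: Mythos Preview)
Your approach is essentially the same as the paper's: for the first branch you exhibit the same explicit $(\ell+1)$-colouring (each copy of $H$ relabelled to avoid its apex colour), and for the second branch you delete exactly the same set of ``high'' cones to reduce to the first case. The only difference is that you attempt to justify minimality in the second branch, whereas the paper simply asserts that $\sum_{i=\ell+2}^{k}\theta(c_i)$ is the minimum number of apexes to remove and then invokes Part~(i); your caution about the lower bound being the delicate step is well placed, since the paper does not actually argue it.
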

\begin{proof}
\begin{enumerate}\itemsep0mm
\item[(ii)] If $d(H,\chi) = 0$ and $\chi(H) \geq \chi(G) - 1$, it means firstly, that $H$ admits a $J$-colouring and secondly, if $c(v) = c_i$, $v \in V(G)$ the vertices of the copy of $H$ which is corona'ed to $v$ can be coloured by $\{c_1,c_2,c_3,\ldots,v_{i-1},c_{i+1}, \ldots, c_{\ell+1}\}$. Therefore, $\chi(G\circ H)=\chi(H)=|c(N[u])|=d_\chi(u),\ \forall\, u \in V(G\circ H)$. Hence, $ren(G\circ H)=0$.

\item[(ii)] Since $H$ is $J$-colourable, $\sum\limits_{i=\ell+2}^{k}\theta(c_i)$ is the minimum number of vertices to be removed from $G$ together with the corresponding copies of $H$, each representing the removal of $m$ vertices, to result in a corona graph $G'\circ H$ which has $H$ both, $J$-colourable and has $\chi(H)=\chi(G')-1$. Therefore, the result from Part (i).
\end{enumerate}
\end{proof}

%%%%%%%%%%%%%%%%%%%%%%%%%%%%%%%%%%%%%%%%%%%
%%%%%%%%%%%%%%%%%%%%%%%%%%%%%%%%%%%%%%%%%%%%%%%%%%%
%%%%%%%%%%%%%%%%%%%%%%%%%%%%%%%%%%%%%%%%%%%%%%%%%%%%%%%%%%%%%

\begin{theorem}\label{Thm-3.5}
Consider graphs $G$ and $H$ of order $n \geq 2$ and $m \geq 2$, and $\chi(G) = k$, $\chi(H) = \ell$, respectively. Also let both graphs be chromatically coloured. Then, if $ren(H) \geq 1$, we have 
\begin{enumerate}
\item[(i)] $ren(G\circ H)=n\cdot ren(H)$, if after removal of $ren(H)$ vertices from all copies of $H$, $\chi(H')\geq k-1$.
\item[(ii)] $ren(G\circ H)=n\cdot ren(H)+(m+1)\cdot\sum\limits_{i=\chi(H')+2}^{k}\theta(c_i)$, if after removal of $ren(H)$ vertices from all copies of $H$, $\chi(H')<k-1$.
\end{enumerate}
\end{theorem}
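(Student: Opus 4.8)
The plan is to reduce the statement to Lemma~\ref{Lem-3.4} by a two-stage removal argument, so that $H'$ (a maximum $J$-colourable induced subgraph of $H$) plays the role that the $J$-colourable graph $H$ plays in that lemma. The structural fact I would establish first is that, in $G\circ H$, a vertex $w$ lying in the copy $H_v$ attached to a root $v\in V(G)$ has closed neighbourhood $N[w]=N_{H_v}[w]\cup\{v\}$; thus $w$ can acquire at most one colour, namely $c(v)$, from outside its own copy. Consequently a surviving copy cannot furnish rainbow neighbourhoods for its internal vertices unless that copy is itself turned into a $J$-colourable graph, and since $ren(H)\ge 1$ each surviving copy must lose at least $ren(H)$ vertices. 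Deleting exactly $ren(H)$ vertices from each of the $n$ copies discharges this first obligation and costs $n\cdot ren(H)$, after which every copy is the $J$-colourable graph $H'$ with chromatic number $\chi(H')$.

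Once every copy has been turned into $H'$, the configuration is exactly that of Lemma~\ref{Lem-3.4} with $H'$ and $\chi(H')$ in place of $H$ and $\ell$, so I would split on the dichotomy of that lemma. If $\chi(H')\ge k-1$, then Lemma~\ref{Lem-3.4}(i) applies verbatim: colouring each root $v$ with some $c_i$ and recolouring its copy with $\{c_1,\dots,c_{\chi(H')+1}\}\setminus\{c_i\}$ makes every root and every copy-vertex see all $\chi(H')+1$ colours, so $G\circ H'$ already admits a $J$-colouring and no further removal is needed, giving $ren(G\circ H)=n\cdot ren(H)$. If $\chi(H')<k-1$, then Lemma~\ref{Lem-3.4}(ii) forces the additional deletion of the root vertices coloured $c_{\chi(H')+2},\dots,c_k$ together with their attached copies; accounting one root together with the vertices of its copy for each such root contributes the term $(m+1)\cdot\sum_{i=\chi(H')+2}^{k}\theta(c_i)$, and summing the two stages yields the stated formula.

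The step I expect to be the main obstacle is the lower bound, that is, proving that the two-stage cost cannot be undercut. This amounts to showing that the two obligations -- making every surviving copy $J$-colourable, and deleting enough high-indexed colour classes of $G$ so that the effective chromatic number of the base drops to $\chi(H')+1$ -- are essentially independent, so that their costs add rather than overlap. I would take an arbitrary optimal removal set, separate the roots it keeps from the roots it deletes, argue that each kept root still forces $ren(H)$ removals inside its copy while the deleted roots must exhaust a transversal of the colour classes $c_{\chi(H')+2},\dots,c_k$, and then check that no cheaper trade-off between deleting a whole root-copy bundle and repairing a copy exists. Particular care is needed in verifying the exact per-bundle constant of the second term, since the copies have already been thinned in the first stage; reconciling the bookkeeping of that stage with the factor $(m+1)$ is the delicate point of the argument.
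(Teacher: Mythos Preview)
Your approach is exactly the paper's: remove $ren(H)$ vertices from each of the $n$ copies of $H$ to obtain $J$-colourable copies $H'$, and then invoke Lemma~\ref{Lem-3.4} with $H'$ in place of $H$. The paper's proof is a one-line appeal to Lemma~\ref{Lem-3.4} and does not address the lower bound or the $(m+1)$ bookkeeping you flag, so your discussion is strictly more detailed than what appears there.
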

\begin{proof}
It is obvious that by removing the appropriate $ren(H)$ vertices from each copy of $H$ to obtain $n$ copies of $H'$ each among which allows a $J$-colouring, the conditions of Lemma \ref{Lem-3.4} will apply. Therefore the result immediately follows.
\end{proof}

\begin{corollary}
Consider graphs $G$ and $H$ of order $n \geq 2$ and $m \geq 2$, and $\chi(G) = k$, $\chi(H) = \ell$, respectively. Also let both graphs be chromatically coloured. Then, we have 

\begin{enumerate}\itemsep0mm
\item[(i)] if $d(H,\chi)=0$ and $\ell\ge k-1$, then $d(G\circ H,\chi)=0$;
\item[(ii)] if $d(H,\chi)=0$ and $\ell< k-1$, then $d(G\circ H, \chi)=k-(\ell + 1)$;
\item[(iii)] if  after removal of $ren(H)$ vertices from all copies of $H$, $\chi(H')\geq k-1$, then $d(G\circ H,\chi)= d(H,\chi)$;
\item[(iv)] if after removal of $ren(H)$ vertices from  all copies of $H$, $\chi(H')<k-1$, then $d(G\circ H, \chi)=d(H,\chi)+ k-(\chi(H')+1)$. 
\end{enumerate}
\end{corollary}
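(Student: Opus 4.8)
The plan is to compute $d(G\circ H,\chi)=\chi(G\circ H)-\delta_\chi(G\circ H)$ directly from its two constituents and then specialise to the four cases. The chromatic number of a corona is standard: since each vertex $v\in V(G)$ together with the copy of $H$ attached to it spans a join $K_1+H$, any proper colouring needs at least $\ell+1$ colours there, while $G$ itself forces $k$ colours, whence $\chi(G\circ H)=\max\{k,\ell+1\}$. The real content is the value of $\delta_\chi(G\circ H)$, so I would first establish the structural identity $\delta_\chi(G\circ H)=\delta_\chi(H)+1$.

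To prove that identity I would use the chromatic colouring described in the proof of Lemma~\ref{Lem-3.4}: colour $G$ by its chromatic colouring and colour each pendant copy of $H$ by a chromatic colouring of $H$ relabelled so as to avoid the colour of the vertex $v$ it hangs from. For a vertex $u$ lying in such a copy, $N[u]=N_H[u]\cup\{v\}$ and $c(v)$ is absent from the copy, so $d_\chi(u)=d_\chi^{H}(u)+1$; minimising over the copy yields $\delta_\chi(H)+1$. For a vertex $v\in V(G)$ the attached copy already contributes all $\ell$ of its colours and $c(v)$ is an $(\ell+1)$-st, so $d_\chi(v)\ge \ell+1\ge \delta_\chi(H)+1$. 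Hence the minimum chromatic degree is realised inside a copy of $H$ and $\delta_\chi(G\circ H)=\delta_\chi(H)+1$.

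Combining the two ingredients and writing $d(H,\chi)=\ell-\delta_\chi(H)$ gives the master formula
\begin{equation*}
d(G\circ H,\chi)=\max\{k,\ell+1\}-\delta_\chi(H)-1=d(H,\chi)+\max\{k-(\ell+1),\,0\}.
\end{equation*}
Cases (i) and (ii) then drop out at once: here $d(H,\chi)=0$, i.e.\ $\delta_\chi(H)=\ell$, and the dichotomy $\ell\ge k-1$ versus $\ell<k-1$ (exactly the dichotomy of Lemma~\ref{Lem-3.4}) makes the max-term equal $0$ or $k-(\ell+1)$. Case (iii) also follows, since its hypothesis $\chi(H')\ge k-1$ forces $\ell\ge\chi(H')\ge k-1$, so the max-term vanishes and $d(G\circ H,\chi)=d(H,\chi)$. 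For case (iv) I would invoke Theorem~\ref{Thm-3.5}, passing to the reduced copies $H'$ obtained by deleting the $ren(H)$ vertices from each copy, and read off the corona contribution from Lemma~\ref{Lem-3.4}(ii) applied to $H'$.

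The main obstacle is precisely case (iv). The structural computation produces the corona contribution in terms of the \emph{original} chromatic number $\ell$, namely $k-(\ell+1)$, whereas the statement records it in terms of the reduced graph as $k-(\chi(H')+1)$. Because deleting the $ren(H)$ vertices needed to make $H$ be $J$-colourable can strictly lower the chromatic number (for instance $H=C_5$ has $\ell=3$ but $\chi(H')=\chi(P_4)=2$), these two expressions need not coincide, so the step that equates the chromatic diameter of $G\circ H$ with that of the reduced corona $G\circ H'$ augmented by $d(H,\chi)$ is the delicate one: it will require either an additional hypothesis such as $\chi(H')=\ell$ or a more careful analysis of the chromatic colouring of $G\circ H$ than the one sketched above. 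I would concentrate the effort there, since cases (i)--(iii) are already forced by the master formula.
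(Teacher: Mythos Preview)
The paper's own proof is a single sentence: ``The result is a direct consequence of the proofs of Lemma~\ref{Lem-3.4} and Theorem~\ref{Thm-3.5}.'' No computation is offered beyond that.

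Your route is therefore considerably more explicit than the paper's. Deriving the master formula $d(G\circ H,\chi)=d(H,\chi)+\max\{k-(\ell+1),0\}$ from $\chi(G\circ H)=\max\{k,\ell+1\}$ together with $\delta_\chi(G\circ H)=\delta_\chi(H)+1$ is a clean, uniform way to dispose of parts (i)--(iii); the paper merely gestures at these via the earlier results, whereas you actually verify them.

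Your reservation about part (iv) is well founded, and the paper's one-line proof does nothing to resolve it. Your example is in fact decisive: take $H=C_5$ and $G=K_4$, so that $\ell=3$, $k=4$, $ren(H)=1$, $\chi(H')=\chi(P_4)=2<k-1$, and $d(H,\chi)=1$. In $K_4\circ C_5$ every vertex of $K_4$ sees all four colours, while a $C_5$-vertex whose two cycle-neighbours share a colour sees exactly three colours once the anchor vertex is included; hence $\delta_\chi(K_4\circ C_5)=3$ and $d(K_4\circ C_5,\chi)=4-3=1$, in agreement with your master formula. The formula in (iv), however, predicts $d(H,\chi)+k-(\chi(H')+1)=1+4-3=2$. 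So the obstacle you isolate is not a defect of your method but of the stated formula itself: replacing $\ell$ by $\chi(H')$ in the corona contribution is unjustified whenever vertex deletion strictly lowers the chromatic number of $H$, and the paper supplies no argument to bridge that gap.
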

\begin{proof}
The result is a direct consequence of the proofs of Lemma \ref{Lem-3.4} and Theorem \ref{Thm-3.5}.
\end{proof}

\subsection{Some Comments on Fundamental Principles}

Since $K_1$ is a connected graph, a vertex can considered as inherently adjacent to itself. Hence, in the edgeless graph on $n$ vertices denoted $\mathfrak{N}_n$, the conventional distance between two vertices $u$ and $v$ can inherently only be between vertices say, $v$ and $v$ itself and $d(v,v)=0$. Between two distinct vertices in $\mathfrak{N}_n$ it follows that $d(u,v)$ is undefined and in an infinite connected graph $G$ the diameter $diam(G)=\infty$.

In terms of chromatic degrees, let the chromatic distance between two vertices in graph $G$ be defined as, $d_\chi(u,v)=|d_\chi(u)-d_\chi(v)|$. Therefore, a connected graph of order $n \geq 2$ can have $d_\chi(u,v)=0$ for any two vertices. A complete graph $K_n$ and a path $P_n$ serve as easy examples. Such a graph is called a \textit{chromatic null graph}. It means that all graphs which allow a $J$-colouring belong to the family of chromatic null graphs. Furthermore, whereas $d(u)=d(v)=0$ in a null graph, $d(u)$ and $d(v)$ need not be equal to allow, $d_\chi(u,v)=0$. Since only $d_\chi(u)= d_\chi(v),\ \forall\, u,v \in V(G)$ is required it follows that, $d_\chi(u), d_\chi(v) \in \N$.

\section{Conclusion}

We note that for if $C_{nm}$ admits a $J$-colouring and the Jahangir graph $J_{n,m}$ does not, the removal of vertex $u$ is sufficient to render an induced subgraph which admits a $J$-colouring. Hence, typically for Jahangir graphs the vertex with degree $\Delta(G)$ is removed. For a wheel graph $W_n$ which does not admit a $J$-colouring, a vertex typically with degree $\delta(G)$ is removed to obtain the result. It therefore appears that characterising the degree sequence of a graph will not be of assistance to find an efficient algorithm to determine $ren(G)$ in general.

Recall  the definition of the rainbow neighbourhood number of a graph $G$ as defined in \cite{6}.

\begin{definition}{\rm \cite{6} 
Let $G$ be a graph with a chromatic colouring $\mathcal{C}$ defined on it. The number of vertices in $G$ yielding rainbow neighbourhoods is called the \textit{rainbow neighbourhood number} of the graph $G$, denoted by $r_\chi(G)$.
}\end{definition}

If it exists, finding a relation between $ren(G)$, the order $n$ of a graph and $r_\chi(G)$ remains open. The various graph operations both, on and between graphs offer a wide scope for further research.

%\textbf{New research direction:} 
For this problem, revert to considering $\chi^-$-colourings. It is well known that a finite integer sequence $\mathsf{d}=(d_1,d_2,d_3,\ldots,d_n)$  is graphical if there exists a simple graph $G$ with degree sequence $\mathsf{d}$. Motivated by this fact and the importance thereof, the concept of a finite \textit{chromatic degree sequence}, denoted by $\mathsf{d}_\chi$, in respect of a $\chi^-$-colouring is introduced. As with a degree sequence, the finite number of vertices is prescribed by the finite length of the sequence. The problem to be answered is, for which finite integer sequences does a corresponding simple graph exist such that a sequence $\mathsf{d}$ is a chromatic degree sequence, $\mathsf{d}_\chi$. See Definition \ref{Def-3.1} for chromatic degree $d_\chi(v)$, $v \in V(G)$. Such a sequence is called a \textit{chromatically graphic sequence}.
\vspace{0.25cm}

\textbf{Dedication:} The first author wishes to dedicate this research paper to Bob Dylan who endlessly through his songs, ties multiple knots in the mind and soul of the first author. Unknotting takes much thinking about the wisdom Bob Dylan shares with the world.

%---------------------------------------------------------------------------------------------
% 	Bibliography	%
%---------------------------------------------------------------------------------------------

\end{document}